\documentclass[a4paper,11pt]{amsart}
\usepackage{amsmath,amssymb,amsfonts,latexsym}

\setcounter{MaxMatrixCols}{10}

\newtheorem{theorem}{Theorem}[section]

\newtheorem{corollary}[theorem]{Corollary}

\input{tcilatex}

\begin{document}
\title[\textbf{Two product Genocchi and Bernoulli polynomials}]{\textbf{Some
new identities of Genocchi numbers and polynomials involving Bernoulli and
Euler polynomials}}
\author[\textbf{S. Araci}]{\textbf{Serkan Araci}}
\address{\textbf{University of Gaziantep, Faculty of Science and Arts,
Department of Mathematics, 27310 Gaziantep, TURKEY}}
\email{\textbf{mtsrkn@hotmail.com, saraci88@yahoo.com.tr}}
\author[\textbf{M. Acikgoz}]{\textbf{Mehmet Acikgoz}}
\address{\textbf{University of Gaziantep, Faculty of Science and Arts,
Department of Mathematics, 27310 Gaziantep, TURKEY}}
\email{\textbf{acikgoz@gantep.edu.tr}}
\author[\textbf{E. \c{S}en}]{\textbf{Erdo\u{g}an \c{S}en}}
\address{\textbf{Department of Mathematics, Faculty of Science and Letters,
Nam\i k Kemal University, 59030 Tekirda\u{g}, TURKEY}}
\email{\textbf{erdogan.math@gmail.com}}

\begin{abstract}
In this paper, we deal with some new formulae for product of two Genocchi
polynomials together with both Euler polynomials and Bernoulli polynomials.
We get some applications for Genocchi polynomials. Our applications possess
a number of interesting properties to study in theory of Analytic numbers
which we express in the present paper.

\vspace{2mm}\noindent \textsc{2010 Mathematics Subject Classification.}
11S80, 11B68.

\vspace{2mm}

\noindent \textsc{Keywords and phrases. }Genocchi numbers and polynomials,
Bernoulli numbers and polynomials, Euler numbers and polynomials,
Application.
\end{abstract}

\maketitle




\section{\textbf{Introduction}}


The history of Genocchi numbers can be traced back to Italian mathematician
Angelo Genocchi (1817-1889). From Genocchi to the present time, Genocchi
numbers have been extensively studied in many different context in such
branches of Mathematics as, for instance, elementary number theory, complex
analytic number theory, Homotopy theory (stable Homotopy groups of spheres),
differential topology (differential structures on spheres), theory of
modular forms (Eisenstein series), $p$-adic analytic number theory ($p$-adic 
$L$-functions), quantum physics (quantum Groups). The works of Genocchi
numbers and their combinatorial relations have received much attention \cite%
{araci 3}, \cite{Araci 4}, \cite{araci 2}, \cite{Araci 5}, \cite{Araci 6}, 
\cite{Araci 7}, \cite{Jolany 1}, \cite{Jolany 2}, \cite{Kim 8}, \cite{Kim 10}%
. For showing the value of this type of numbers and polynomials, we list
some of their applications.

In the complex plane, the Genocchi numbers, named after Angelo Genocchi, are
a sequence of integers that defined by the exponential generating function:%
\begin{equation}
\frac{2t}{e^{t}+1}=e^{Gt}=\sum_{n=0}^{\infty }G_{n}\frac{t^{n}}{n!},\text{ }%
\left( \left\vert t\right\vert <\pi \right)   \label{Equation 1}
\end{equation}%
with the usual convention about replacing $G^{n}$ by $G_{n}$, is used. When
we multiply with $e^{xt}$ in the left hand side of the Eq. (\ref{Equation 1}%
), then we have%
\begin{equation}
\sum_{n=0}^{\infty }G_{n}\left( x\right) \frac{t^{n}}{n!}=\frac{2t}{e^{t}+1}%
e^{xt},\text{ }\left( \left\vert t\right\vert <\pi \right) 
\label{Equation 28}
\end{equation}%
where $G_{n}\left( x\right) $ called Genocchi polynomials. It follows from (%
\ref{Equation 28}) that $%
G_{1}=1,G_{2}=-1,G_{3}=0,G_{4}=1,G_{5}=0,G_{6}=-3,G_{7}=0,G_{8}=17,\cdots ,$
and $G_{2n+1}=0$ for $n\in 
\mathbb{N}
$ (for details, see \cite{araci 3}, \cite{Araci 4}, \cite{araci 2}, \cite%
{Araci 5}, \cite{Jolany 1}, \cite{Jolany 2}, \cite{Kim 8}, \cite{Kim 10}).

Differentiating both sides of (\ref{Equation 1}), with respect to $x$, then
we have the following:%
\begin{equation}
\frac{d}{dx}G_{n}\left( x\right) =nG_{n-1}\left( x\right) \text{ \textit{and}
}\deg G_{n+1}\left( x\right) =n.  \label{Equation 2}
\end{equation}

On account of (\ref{Equation 1}) and (\ref{Equation 2}), we can easily
derive the following:%
\begin{equation}
\int_{b}^{a}G_{n}\left( x\right) dx=\frac{G_{n+1}\left( a\right)
-G_{n+1}\left( b\right) }{n+1}\text{.}  \label{Equation 3}
\end{equation}

By (\ref{Equation 1}), we get%
\begin{equation}
G_{n}\left( x\right) =\sum_{k=0}^{n}\binom{n}{k}G_{k}x^{n-k}.
\label{Equation 4}
\end{equation}

Thanks to (\ref{Equation 3}) and (\ref{Equation 4}), we acquire the
following equation (\ref{Equation 5}):%
\begin{equation}
\int_{0}^{1}G_{n}\left( x\right) dx=-2\frac{G_{n+1}}{n+1}\text{.}
\label{Equation 5}
\end{equation}

It is not difficult to see that%
\begin{eqnarray}
e^{tx} &=&\frac{1}{2t}\left( \frac{2t}{e^{t}+1}e^{\left( 1+x\right) t}+\frac{%
2t}{e^{t}+1}e^{xt}\right)  \label{Equation 6} \\
&=&\frac{1}{2t}\sum_{n=0}\left( G_{n}\left( x+1\right) +G_{n}\left( x\right)
\right) \frac{t^{n}}{n!}\text{.}  \notag
\end{eqnarray}

By expression of (\ref{Equation 6}), then we have%
\begin{equation}
2x^{n}=\frac{G_{n+1}\left( x+1\right) +G_{n+1}\left( x\right) }{n+1}
\label{Equation 7}
\end{equation}%
(see \cite{Kim 8}, \cite{Jolany 2}).

Let $\mathcal{P}_{n}\mathcal{=}\left\{ p\left( x\right) \in 
\mathbb{Q}
\left[ x\right] \mid \deg p\left( x\right) \leq n\right\} $ be the $\left(
n+1\right) $-dimensional vector space over $%
\mathbb{Q}
$. Probably, $\left\{ 1,x,x^{2},\cdots ,x^{n}\right\} $ is the most natural
basis for $\mathcal{P}_{n}$. From this, we note that $\left\{ G_{1}\left(
x\right) ,G_{2}\left( x\right) ,\cdots ,G_{n+1}\left( x\right) \right\} $ is
also good basis for space $\mathcal{P}_{n}$.

In \cite{Kim 5}, Kim $et$ $al$. introduced the following integrals:%
\begin{equation}
I_{m,n}=\int_{0}^{1}B_{m}\left( x\right) x^{n}dx\text{ \ and }%
J_{m,n}=\int_{0}^{1}E_{m}\left( x\right) x^{n}dx  \label{Equation 8}
\end{equation}%
where $B_{m}\left( x\right) $ and $E_{n}\left( x\right) $ are called
Bernoulli polynomials and Euler polynomials, respectively. Also, they are
defined by the following generating functions:%
\begin{eqnarray}
e^{B\left( x\right) t} &=&\sum_{n=0}^{\infty }B_{n}\left( x\right) \frac{%
t^{n}}{n!}=\frac{t}{e^{t}-1}e^{xt},\text{ }\left\vert t\right\vert <2\pi ,
\label{Equation 9} \\
e^{E\left( x\right) t} &=&\sum_{n=0}^{\infty }E_{n}\left( x\right) \frac{%
t^{n}}{n!}=\frac{2}{e^{t}+1}e^{xt},\text{ }\left\vert t\right\vert <\pi
\label{Equation 11}
\end{eqnarray}%
with $B^{n}\left( x\right) :=B_{n}\left( x\right) $ and $E^{n}\left(
x\right) :=E_{n}\left( x\right) $, symbolically. By substituting $x=0$ in (%
\ref{Equation 9}) and (\ref{Equation 11}), then we readily see that, 
\begin{eqnarray}
\frac{t}{e^{t}-1} &=&\sum_{n=0}^{\infty }B_{n}\left( 0\right) \frac{t^{n}}{n!%
},  \label{Equation 10} \\
\frac{2}{e^{t}+1} &=&\sum_{n=0}^{\infty }E_{n}\left( 0\right) \frac{t^{n}}{n!%
}.  \label{Equation 12}
\end{eqnarray}

Here $B_{n}\left( 0\right) :=B_{n}$ and $E_{n}\left( 0\right) :=E_{n}$ are
called Bernoulli numbers and Euler numbers, respectively. Thus, Bernoulli
and Euler numbers and polynomials have the following identities:%
\begin{equation}
B_{n}\left( x\right) =\sum_{k=0}^{n}\binom{n}{k}B_{k}x^{n-k}\text{ and }%
E_{n}\left( x\right) =\sum_{k=0}^{n}\binom{n}{k}E_{k}x^{n-k}.
\label{Equation 13}
\end{equation}%
(for details, see \cite{Acikgoz}, \cite{Bayad}, \cite{araci 1}, \cite{Cangul}%
, \cite{Kim 11}, \cite{Kim 9}, \cite{Luo}). By (\ref{Equation 10}) and (\ref%
{Equation 12}), we have the following recurrence relations of Euler and
Bernoulli numbers, as follows:%
\begin{equation}
B_{0}=1,\text{ }B_{n}\left( 1\right) -B_{n}=\delta _{1,n}\text{ and }E_{0}=1,%
\text{ }E_{n}\left( 1\right) +E_{n}=2\delta _{0,n}  \label{Equation 14}
\end{equation}%
where $\delta _{n,m}$ is the Kronecker's symbol which is defined by%
\begin{equation}
\delta _{n,m}=\left\{ 
\begin{array}{cc}
1, & \text{if }n=m \\ 
0, & \text{if }n\neq m.%
\end{array}%
\right.   \label{Equation 15}
\end{equation}

In the complex plane, we can write the following:%
\begin{equation}
\sum_{n=0}^{\infty }G_{n}\frac{\left( it\right) ^{n}}{n!}=it\frac{2}{e^{it}+1%
}=it\sum_{n=0}^{\infty }E_{n}\frac{\left( it\right) ^{n}}{n!}\text{.}
\label{Equation 37}
\end{equation}

By (\ref{Equation 37}), we have%
\begin{equation*}
\sum_{n=0}^{\infty }\left( \frac{G_{n+1}}{n+1}\right) \frac{\left( it\right)
^{n}}{n!}=\sum_{n=0}^{\infty }E_{n}\frac{\left( it\right) ^{n}}{n!},
\end{equation*}%
by comparing coefficients on the both sides of the above equatlity, then we
have%
\begin{equation}
\frac{G_{n+1}}{n+1}=E_{n},\text{ (see \cite{Kimm 8}).}  \label{Equation 29}
\end{equation}

Via the equation (\ref{Equation 29}), our results in the present paper can
be extended to Euler polynomials.

From of Eqs (\ref{Equation 8}-\ref{Equation 15}), Kim $et$ $al$. derived
some new formulae on the product for two and several Bernoulli and Euler
polynomials (for details, see [21-26]). In \cite{He}, He and Wang also gave
formulae of products of the Apostol-Bernoulli and Apostol-Euler Polynomials.

By the same motivation of the above knowledge, we write this paper. We give
some interesting properties which are procured from the basis of Genocchi.
From our methods, we obtain some new identities including Bernoulli and
Euler polynomials. Also, by using (\ref{Equation 29}), we derive our results
in terms of Euler polynomials.

\section{\textbf{On the Genocchi numbers and polynomials}}

In this section, we introduce the following integral equation: For $m,n\geq
1,$%
\begin{equation}
T_{m,n}=\int_{0}^{1}G_{m}\left( x\right) x^{n}dx\text{.}  \label{Equation 16}
\end{equation}

By (\ref{Equation 16}), becomes:%
\begin{equation*}
T_{m,n}=-\frac{G_{m+1}}{m+1}-\frac{n}{m+1}\int_{0}^{1}G_{m+1}\left( x\right)
x^{n-1}dx\text{.}
\end{equation*}

Thus, we have the following recurrence formulas, as follows:%
\begin{equation*}
T_{m,n}=-\frac{G_{m+1}}{m+1}-\frac{n}{m+1}T_{m+1,n-1}
\end{equation*}%
by continuing with the above recurrence relation, then we derive that%
\begin{equation*}
T_{m,n}=-\frac{G_{m+1}}{m+1}+\left( -1\right) ^{2}\frac{n}{\left( m+1\right)
\left( m+2\right) }G_{m+2}+\left( -1\right) ^{2}\frac{n\left( n-1\right) }{%
\left( m+1\right) \left( m+2\right) }T_{m+2,n-2}.
\end{equation*}

Now also, we develop the following for sequel of this paper:%
\begin{equation}
T_{m,n}=\frac{1}{n+1}\sum_{j=1}^{n}\left( -1\right) ^{j}\frac{\binom{n+1}{j}%
}{\binom{m+j}{m}}G_{m+j}+2\frac{\left( -1\right) ^{n+1}G_{n+m+1}}{\left(
n+m+1\right) \binom{n+m}{m}}.  \label{Equation 17}
\end{equation}

Let us now introduce the polynomial 
\begin{equation*}
p\left( x\right) =\sum_{l=0}^{n}G_{l}\left( x\right) x^{n-l}\text{, with }%
n\in 
\mathbb{N}
.
\end{equation*}

Taking $k$-th derivative of the above equality, then we have 
\begin{eqnarray}
p^{\left( k\right) }\left( x\right) &=&\left( n+1\right) n\left( n-1\right)
\cdots \left( n-k+2\right) \sum_{l=k}^{n}G_{l-k}\left( x\right) x^{n-l}
\label{Equation 18} \\
&=&\frac{\left( n+1\right) !}{\left( n-k+1\right) !}\sum_{l=k}^{n}G_{l-k}%
\left( x\right) x^{n-l}\text{ }\left( k=0,1,2,\cdots ,n\right) .  \notag
\end{eqnarray}

\begin{theorem}
\label{Theorema}The following equality holds true:%
\begin{gather*}
\sum_{l=0}^{n}G_{l}\left( x\right) x^{n-l} \\
=\sum_{k=1}^{n-1}\left( \sum_{j=1}^{n-k}\left( -1\right) ^{j}\frac{\binom{%
n-k+1}{j}}{\left( n-k+1\right) \binom{k+j}{k}}G_{k+j}+2\frac{\left(
-1\right) ^{n-k+1}G_{n+1}}{\left( n+1\right) \binom{n}{k}}-2\frac{G_{k+1}}{%
k+1}\right) \\
+\sum_{k=1}^{n}\left( \frac{\binom{n+2}{k}}{n+2}\sum_{l=k-1}^{n-1}\left(
2-G_{l-k+1}-G_{n-k+1}\right) \right) B_{k}\left( x\right) \text{.}
\end{gather*}
\end{theorem}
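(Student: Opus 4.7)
The natural approach is to regard the left-hand side as a polynomial $p(x)=\sum_{l=0}^{n}G_{l}(x)x^{n-l}$ and expand it in the Bernoulli basis $\{1,B_{1}(x),B_{2}(x),\ldots\}$. Writing $p(x)=\sum_{k\geq 0}b_{k}B_{k}(x)$, the content of the theorem is simply an explicit identification of each coefficient $b_{k}$: the constant first sum on the right is $b_{0}$, while the bracketed coefficient of $B_{k}(x)$ is $b_{k}$ for $k\geq 1$.

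The extraction of the $b_{k}$ rests on two standard facts about Bernoulli polynomials, namely $B_{j}^{(k)}(x)=\frac{j!}{(j-k)!}B_{j-k}(x)$ and $\int_{0}^{1}B_{m}(x)\,dx=\delta_{m,0}$. Differentiating $p$ a total of $k$ times and integrating over $[0,1]$ yields $b_{k}=\frac{1}{k!}\int_{0}^{1}p^{(k)}(x)\,dx$; equivalently, $b_{0}=\int_{0}^{1}p(x)\,dx$ and, for $k\geq 1$, $b_{k}=\frac{1}{k!}\bigl(p^{(k-1)}(1)-p^{(k-1)}(0)\bigr)$. This is the only conceptual input; the remainder is computation using the machinery already developed in Section 2.

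To compute $b_{0}$, I would interchange the sum and the integral to get $b_{0}=\sum_{l=0}^{n}T_{l,n-l}$ (the $l=0$ term vanishing since $G_{0}=0$), then substitute the closed form (\ref{Equation 17}) of $T_{m,n}$ into each summand, separating the extremal $l=n$ piece via (\ref{Equation 5}), namely $T_{n,0}=-2G_{n+1}/(n+1)$. After reorganization, this reproduces the constant first sum in the stated identity, with the correction terms $-2G_{k+1}/(k+1)$ arising from the trailing $l=n$ contribution. To compute $b_{k}$ for $k\geq 1$, I would insert the explicit formula (\ref{Equation 18}) for $p^{(k-1)}(x)$: at $x=0$ the factor $x^{n-k+1-l}$ kills every summand except $l=n-k+1$, leaving $\frac{(n+1)!}{(n-k+2)!}G_{n-k+1}$; at $x=1$ one needs the values $G_{l}(1)$, which are obtained by setting $x=0$ in (\ref{Equation 7}) and give $G_{l}(1)+G_{l}=2\delta_{l,1}$ for $l\geq 1$, with $G_{0}(1)=0$. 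The prefactor $\frac{(n+1)!}{k!(n-k+2)!}$ then collapses to $\frac{\binom{n+2}{k}}{n+2}$, matching the common factor in front of $B_{k}(x)$ on the right-hand side.

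The main obstacle is the final bookkeeping: translating the raw expression $\sum_{l=0}^{n-k+1}G_{l}(1)-G_{n-k+1}$ into the stated inner sum $\sum_{l=k-1}^{n-1}\bigl(2-G_{l-k+1}-G_{n-k+1}\bigr)$ via the reindexing $m=l-k+1$ and the uniform relation $G_{l}(1)+G_{l}=2\delta_{l,1}$. This requires careful accounting for the exceptional contributions of $l=0$ and $l=1$ --- where $G_{0}(1)=0$ while $G_{1}(1)=1$, in contrast with the uniform behavior $G_{l}(1)=-G_{l}$ for $l\geq 2$ --- and then allocating the boundary terms $2(-1)^{n-k+1}G_{n+1}/((n+1)\binom{n}{k})$ and $-2G_{k+1}/(k+1)$ to the correct slot in the constant first sum so that the two pieces of the identity assemble cleanly.
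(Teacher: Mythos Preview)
Your proposal is correct and follows essentially the same route as the paper: expand $p(x)=\sum_{l=0}^{n}G_{l}(x)x^{n-l}$ in the Bernoulli basis, compute $a_{0}=\int_{0}^{1}p(x)\,dx$ via the closed form (\ref{Equation 17}) for $T_{m,n}$, and compute $a_{k}=\frac{1}{k!}\bigl(p^{(k-1)}(1)-p^{(k-1)}(0)\bigr)$ for $k\ge 1$ using the derivative formula (\ref{Equation 18}) together with $G_{l}(1)=2\delta_{l,1}-G_{l}$. The only difference is presentational: the paper is terser about the bookkeeping you flag as the main obstacle, but the ingredients and the order of the argument are identical.
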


\begin{proof}
On account of the properties of the Genocchi basis for the space of
polynomials of degree less than or equal to $n$ with coefficients in $%
\mathbb{Q}
$, then $p\left( x\right) $ can be written as follows:%
\begin{equation}
p\left( x\right) =\sum_{k=0}^{n}a_{k}B_{k}\left( x\right)
=a_{0}+\sum_{k=1}^{n}a_{k}B_{k}\left( x\right) \text{.}  \label{Equation 19}
\end{equation}%
Therefore, by (\ref{Equation 19}), we obtain%
\begin{align}
a_{0}& =\int_{0}^{1}p\left( x\right)
dx=\sum_{k=1}^{n}\int_{0}^{1}G_{k}\left( x\right)
x^{n-k}dx=\sum_{k=1}^{n}T_{k,n-k}=\sum_{k=1}^{n-1}T_{k,n-k}+T_{k,0}
\label{Equation 30} \\
& =\sum_{k=1}^{n-1}\frac{1}{n-k+1}\sum_{j=1}^{n-k}\left( -1\right) ^{j}\frac{%
\binom{n-k+1}{j}}{\binom{k+j}{k}}G_{k+j}+2\frac{\left( -1\right)
^{n-k+1}G_{n+1}}{\left( n+1\right) \binom{n}{k}}-2\frac{G_{k+1}}{k+1}\text{.}
\notag
\end{align}%
From expression of (\ref{Equation 18}), we get%
\begin{eqnarray}
a_{k} &=&\frac{1}{k!}\left( p^{\left( k-1\right) }\left( 1\right) -p^{\left(
k-1\right) }\left( 0\right) \right)  \label{Equation 31} \\
&=&\frac{\left( n+1\right) !}{k!\left( n-k+2\right) !}\left(
\sum_{l=k-1}^{n}G_{l-k+1}\left( 1\right) -0^{n-l}G_{n-k+1}\right)  \notag \\
&=&\frac{\binom{n+2}{k}}{n+2}\sum_{l=k-1}^{n-1}\left(
2-G_{l-k+1}-G_{n-k+1}\right) \text{.}  \notag
\end{eqnarray}

Substituting equations (\ref{Equation 30}) and (\ref{Equation 31}) into (\ref%
{Equation 19}), we arrive at the desired result.
\end{proof}

By using (\ref{Equation 29}) and theorem \ref{Theorema}, we get the
following corollary, which has been stated in terms of Euler polynomials.

\begin{corollary}
For any $n\in 
\mathbb{N}
$, then we have%
\begin{gather*}
\sum_{l=0}^{n}G_{l}\left( x\right) x^{n-l} \\
=\sum_{k=1}^{n-1}\left( \sum_{j=1}^{n-k}\left( -1\right) ^{j}\frac{\left(
k+j\right) \binom{n-k+1}{j}}{\left( n-k+1\right) \binom{k+j}{j}}E_{k+j-1}+2%
\frac{\left( -1\right) ^{n-k+1}E_{n}}{\binom{n}{k}}-2E_{k}\right) \\
+\sum_{k=1}^{n}\left( \frac{\binom{n+2}{k}}{n+2}\sum_{l=k-1}^{n-1}\left(
2-\left( l-k+1\right) E_{l-k}-\left( n-k+1\right) E_{n-k}\right) \right)
B_{k}\left( x\right) \text{.}
\end{gather*}
\end{corollary}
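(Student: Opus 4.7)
The plan is to derive the corollary as an immediate consequence of Theorem \ref{Theorema}, by applying the identity in equation (\ref{Equation 29}). That identity reads $G_{n+1}/(n+1)=E_{n}$, or equivalently $G_{m}=m\,E_{m-1}$ for $m\ge 1$, while $G_{0}=0$. Since the statement of Theorem \ref{Theorema} already matches the left-hand side of the corollary, the task reduces to rewriting every Genocchi quantity on the right-hand side in terms of Euler numbers, term by term.

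First I would handle the ``constant'' portion of Theorem \ref{Theorema}, i.e.\ the part not multiplied by $B_{k}(x)$. In the summand $(-1)^{j}\binom{n-k+1}{j}G_{k+j}/[(n-k+1)\binom{k+j}{k}]$ I would substitute $G_{k+j}=(k+j)E_{k+j-1}$ and use the symmetry $\binom{k+j}{k}=\binom{k+j}{j}$, producing exactly $(-1)^{j}(k+j)\binom{n-k+1}{j}E_{k+j-1}/[(n-k+1)\binom{k+j}{j}]$, which is the first term appearing in the corollary. The remaining two pieces transform by direct application of (\ref{Equation 29}): $2(-1)^{n-k+1}G_{n+1}/[(n+1)\binom{n}{k}]$ becomes $2(-1)^{n-k+1}E_{n}/\binom{n}{k}$, and $-2G_{k+1}/(k+1)$ becomes $-2E_{k}$.

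Next I would address the $B_{k}(x)$ block. Inside $\sum_{l=k-1}^{n-1}\bigl(2-G_{l-k+1}-G_{n-k+1}\bigr)$, the substitutions $G_{l-k+1}=(l-k+1)E_{l-k}$ and $G_{n-k+1}=(n-k+1)E_{n-k}$ convert the sum into $\sum_{l=k-1}^{n-1}\bigl(2-(l-k+1)E_{l-k}-(n-k+1)E_{n-k}\bigr)$, matching the corollary. The boundary index $l=k-1$ needs a brief sanity check: there the substitution formally yields $0\cdot E_{-1}$, which is consistent with $G_{0}=0$, so no extra term is introduced.

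There is essentially no obstacle beyond careful bookkeeping; the only thing to watch is precisely this edge case at $l=k-1$ together with the symmetry $\binom{k+j}{k}=\binom{k+j}{j}$ used to rewrite the binomial denominator. Once these observations are in place, substituting equations into Theorem \ref{Theorema} completes the proof.
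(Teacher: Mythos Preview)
Your proposal is correct and follows exactly the paper's own approach: the paper simply states that the corollary is obtained from Theorem~\ref{Theorema} by applying equation~(\ref{Equation 29}), and your term-by-term substitution $G_{m}=mE_{m-1}$ (together with the binomial symmetry and the $l=k-1$ edge check) is precisely that.
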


\begin{theorem}
\label{Theorem 2}The following nice identity%
\begin{gather*}
\sum_{l=0}^{n}G_{l}\left( x\right) x^{n-l} \\
=\sum_{k=0}^{n}\left( \left( n+1\right) \binom{n}{k}-\frac{\binom{n+1}{k}}{2}%
\sum_{l=k}^{n-1}\left( G_{l-k}-G_{n-k}\right) \right) E_{k}\left( x\right)
\end{gather*}%
is true.
\end{theorem}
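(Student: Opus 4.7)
The strategy mirrors the proof of Theorem \ref{Theorema}, replacing the Bernoulli-basis expansion with an expansion in the Euler basis $\{E_{0}(x),E_{1}(x),\ldots,E_{n}(x)\}$, which likewise forms a basis of $\mathcal{P}_{n}$. I would set $p(x)=\sum_{l=0}^{n}G_{l}(x)x^{n-l}$ and write $p(x)=\sum_{k=0}^{n}b_{k}E_{k}(x)$, then aim to identify $b_{k}$ in the form claimed.

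To extract $b_{k}$, I replace the ``difference trick'' used for the Bernoulli coefficients (which rests on $B_{m}(1)-B_{m}(0)=\delta_{1,m}$) with a ``sum trick'' tailored to the Euler side: from $E_{j}^{\prime}(x)=jE_{j-1}(x)$ together with $E_{m}(1)+E_{m}(0)=2\delta_{0,m}$ (see (\ref{Equation 14})), differentiating $k$ times and adding the values at $0$ and $1$ collapses the expansion to a single surviving term, so that
\begin{equation*}
b_{k}=\frac{p^{(k)}(1)+p^{(k)}(0)}{2\,k!}.
\end{equation*}

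I would then invoke formula (\ref{Equation 18}) for $p^{(k)}(x)$. At $x=1$ this gives $\tfrac{(n+1)!}{(n-k+1)!}\sum_{l=k}^{n}G_{l-k}(1)$, while at $x=0$ the factor $x^{n-l}$ kills every summand except $l=n$, leaving just $\tfrac{(n+1)!}{(n-k+1)!}\,G_{n-k}$. To trade the boundary values $G_{l-k}(1)$ for Genocchi numbers $G_{l-k}$ I would use (\ref{Equation 7}) at $x=0$, i.e.\ the reflection relation $G_{m}(1)+G_{m}(0)=2m\cdot 0^{m-1}$. Combining these with the algebraic identity $(n+1)\binom{n}{k}=(n-k+1)\binom{n+1}{k}$, which matches the prefactor $\tfrac{(n+1)!}{2\,k!(n-k+1)!}=\tfrac{1}{2}\binom{n+1}{k}$ appearing in $b_{k}$, should repackage the result into the stated sum of the constant $(n+1)\binom{n}{k}$ and the Genocchi term $-\tfrac{1}{2}\binom{n+1}{k}\sum_{l=k}^{n-1}(G_{l-k}-G_{n-k})$.

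The main obstacle will be the bookkeeping in Step~4: one must carefully reindex $\sum_{l=k}^{n}G_{l-k}(1)$, merge it with the lone boundary term $G_{n-k}$ coming from $p^{(k)}(0)$, and handle the low-index anomalies $G_{0}=0$ and $G_{1}(1)=1$ (which deviate from the generic $G_{m}(1)=-G_{m}$ valid for $m\geq 2$) before the bulk cancellation goes through. Once those small cases are absorbed into the constant piece, the remainder is a routine combinatorial identification with the claimed closed form, and substitution back into $p(x)=\sum_{k}b_{k}E_{k}(x)$ completes the proof.
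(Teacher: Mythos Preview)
Your proposal is correct and follows essentially the same route as the paper: it too writes $p(x)=\sum_{k}b_{k}E_{k}(x)$, extracts $b_{k}=\frac{1}{2k!}\bigl(p^{(k)}(1)+p^{(k)}(0)\bigr)$, plugs in formula~(\ref{Equation 18}), and simplifies using $G_{m}(1)=-G_{m}$ for $m\geq 2$ together with the low-index values. The only cosmetic difference is that the paper packages the two evaluations into the single expression $\sum_{l=k}^{n}\bigl(G_{l-k}(1)+0^{n-l}G_{l-k}\bigr)$ before splitting off the $l=n$ term, whereas you separate them from the start.
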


\begin{proof}
Let us now consider the polynomial $p\left( x\right) $ in terms of Euler
polynomials as follows:%
\begin{equation*}
p\left( x\right) =\sum_{k=0}^{n}b_{k}E_{k}\left( x\right) \text{.}
\end{equation*}%
In \cite{Kim 5}, Kim $et$ $al$. gave the coefficients $b_{k}$ by utilizing
from the definition of Bernoulli polynomials. Now also, we give the
coefficients $b_{k}$ by using the definition of Genocchi polynomials, as
follows:%
\begin{eqnarray*}
b_{k} &=&\frac{1}{2k!}\left( p^{\left( k\right) }\left( 1\right) +p^{\left(
k\right) }\left( 0\right) \right) \\
&=&\frac{\left( n+1\right) !}{2k!\left( n-k+1\right) !}\sum_{l=k}^{n}\left(
G_{l-k}\left( 1\right) +0^{n-l}G_{l-k}\right) \\
&=&\left( n+1\right) \binom{n}{k}-\frac{\binom{n+1}{k}}{2}%
\sum_{l=k}^{n-1}\left( G_{l-k}-G_{n-k}\right) \text{.}
\end{eqnarray*}%
After the above applications, we complete the proof of theorem.
\end{proof}

By employing (\ref{Equation 29}) and theorem \ref{Theorem 2}, we have the
following corollary, which is sums of products of two Euler polynomials.

\begin{corollary}
For each $n\in 
\mathbb{N}
$, then we have%
\begin{gather*}
\sum_{l=0}^{n}G_{l}\left( x\right) x^{n-l} \\
=\sum_{k=0}^{n}\left( \left( n+1\right) \binom{n}{k}-\frac{\binom{n+1}{k}}{2}%
\sum_{l=k}^{n-1}\left( \left( l-k\right) E_{l-k-1}-\left( n-k\right)
E_{n-k-1}\right) \right) E_{k}\left( x\right) \text{.}
\end{gather*}
\end{corollary}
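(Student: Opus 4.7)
The plan is to derive this corollary as a direct consequence of Theorem~\ref{Theorem 2} by invoking the identity (\ref{Equation 29}), which connects Genocchi numbers to Euler numbers. Specifically, the only Genocchi numbers appearing in the coefficient of $E_k(x)$ in Theorem~\ref{Theorem 2} are $G_{l-k}$ and $G_{n-k}$, both sitting inside the inner sum $\sum_{l=k}^{n-1}(G_{l-k}-G_{n-k})$. Since (\ref{Equation 29}) reads $\frac{G_{m+1}}{m+1}=E_m$, we have $G_m = m\,E_{m-1}$ for $m\geq 1$, so each of these quantities can be rewritten directly in terms of Euler numbers without any further bookkeeping.

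First I would take the identity asserted in Theorem~\ref{Theorem 2} as the starting point. For each fixed $k\in\{0,1,\ldots,n\}$, I would then apply the substitution $G_{l-k}=(l-k)\,E_{l-k-1}$ and $G_{n-k}=(n-k)\,E_{n-k-1}$ termwise inside the inner sum over $l$. The leading term $(n+1)\binom{n}{k}$ remains untouched, since it contains no Genocchi number; only the subtracted expression changes form. Collecting everything, the coefficient of $E_k(x)$ becomes
\[
(n+1)\binom{n}{k}-\frac{\binom{n+1}{k}}{2}\sum_{l=k}^{n-1}\bigl((l-k)\,E_{l-k-1}-(n-k)\,E_{n-k-1}\bigr),
\]
which is exactly the coefficient appearing in the corollary.

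Finally, summing over $k$ from $0$ to $n$ reproduces the right-hand side of the claimed identity, while the left-hand side $\sum_{l=0}^{n}G_l(x)\,x^{n-l}$ is literally the polynomial $p(x)$ introduced in Section~2 and is unchanged by the rewriting. I do not expect any genuine obstacle: the derivation is a purely mechanical substitution, and the only mild care needed is to verify that the edge cases (when $l-k=0$ or $n-k=0$, so that $G_{l-k}$ or $G_{n-k}$ would involve $G_0=0$) are consistent with the convention $m\,E_{m-1}=0$ at $m=0$, which is automatic since the factor $(l-k)$ or $(n-k)$ kills the term exactly when the corresponding Genocchi number vanishes.
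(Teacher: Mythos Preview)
Your proposal is correct and follows exactly the approach the paper takes: the corollary is stated immediately after Theorem~\ref{Theorem 2} with the single justification ``By employing (\ref{Equation 29}) and theorem \ref{Theorem 2},'' which is precisely your termwise substitution $G_{m}=m\,E_{m-1}$ inside the inner sum. Your remark about the edge case $m=0$ is a nice touch, but otherwise there is nothing to add.
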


We now discover the following theorem, which will be interesting and
worthwhile theorem for studying in Analytic numbers theory.

\begin{theorem}
The following equality holds:%
\begin{gather*}
\sum_{l=0}^{n}\frac{1}{l!\left( n-l\right) !}G_{l}\left( x\right) x^{n-l} \\
=\sum_{l=1}^{n}\frac{2^{l-2}}{l!}\sum_{j=l-1}^{n}\frac{\left(
2-G_{l-j+1}\right) G_{l}\left( x\right) }{\left( j-l+1\right) !\left(
n-j\right) !}+\frac{2^{l-2}}{l!\left( n-l+1\right) !}G_{n-l+1}G_{l}\left(
x\right) \text{.}
\end{gather*}
\end{theorem}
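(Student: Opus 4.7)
I would follow the template of Theorems \ref{Theorema} and \ref{Theorem 2}: identify the left hand side as a concrete polynomial, then expand it in a suitable basis and read off the coefficients. The one new ingredient needed here is a Genocchi-basis analogue of the Bernoulli/Euler coefficient formulas used in those proofs.

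The first step is to recognise the left hand side as $G_{n}(2x)/n!$. This is immediate from the addition formula $G_{n}(x+y)=\sum_{k}\binom{n}{k}G_{k}(x)y^{n-k}$ with $y=x$; equivalently, by (\ref{Equation 28}) and the Cauchy product,
\[
\sum_{n\ge 0}\left(\sum_{l=0}^{n}\frac{G_{l}(x)\,x^{n-l}}{l!(n-l)!}\right)t^{n}=\frac{2t}{e^{t}+1}e^{xt}\cdot e^{xt}=\frac{2t}{e^{t}+1}e^{2xt}=\sum_{n\ge 0}\frac{G_{n}(2x)}{n!}t^{n}.
\]
Since $G_{n}(2x)/n!$ is a polynomial in $x$ of degree $n-1$ and $\{G_{1}(x),\ldots ,G_{n}(x)\}$ spans $\mathcal{P}_{n-1}$, I would write $G_{n}(2x)/n!=\sum_{l=1}^{n}c_{l}G_{l}(x)$ and compute the coefficients $c_{l}$.

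For the coefficient extraction I would use the identity $G_{m}(1)+G_{m}=2\delta_{1,m}$, which comes from evaluating $G_{m}(x+1)+G_{m}(x)=2mx^{m-1}$ (essentially (\ref{Equation 6}) and (\ref{Equation 7})) at $x=0$. Differentiating the expansion $l-1$ times, adding the values at $0$ and $1$, and applying this identity annihilates every term except the one indexed by $l$, producing
\[
c_{l}=\frac{p^{(l-1)}(1)+p^{(l-1)}(0)}{2\,l!},\qquad\text{where }p(x):=G_{n}(2x)/n!.
\]
By (\ref{Equation 2}), $p^{(l-1)}(x)=\frac{2^{l-1}}{(n-l+1)!}G_{n-l+1}(2x)$, and substituting $x=0$ and $x=1$ gives
\[
c_{l}=\frac{2^{l-2}}{l!\,(n-l+1)!}\bigl(G_{n-l+1}(2)+G_{n-l+1}\bigr).
\]

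It remains to massage $G_{n-l+1}(2)+G_{n-l+1}$ into the combinatorial form of the stated right hand side. I would expand $G_{n-l+1}(2)$ either by (\ref{Equation 4}), writing $G_{n-l+1}(2)=\sum_{k}\binom{n-l+1}{k}G_{k}\,2^{n-l+1-k}$, or by iterating (\ref{Equation 7}); after the substitution $j=l-1+k$ (so $k=j-l+1$), the resulting sum should line up with the inner sum over $j\in\{l-1,\ldots ,n\}$ carrying the factor $(2-G_{l-j+1})/\bigl((j-l+1)!(n-j)!\bigr)$, while the leftover $G_{n-l+1}$ contributes the detached term $\tfrac{2^{l-2}}{l!(n-l+1)!}G_{n-l+1}G_{l}(x)$. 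Multiplying by $G_{l}(x)$ and summing over $l$ from $1$ to $n$ then produces the claimed identity. The main obstacle is this last bookkeeping step: matching the $(2-G_{l-j+1})$ factor, the two factorials, and the power $2^{l-2}$ with what actually comes out of expanding $G_{n-l+1}(2)$, which is a direct but delicate combinatorial check relying on the special values $G_{0},G_{1},G_{2},\ldots$ together with the identity $G_{m}(1)=-G_{m}$ for $m\ge 2$.
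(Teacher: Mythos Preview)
Your approach is correct and shares the paper's core idea: expand $p(x)$ in the Genocchi basis and extract coefficients via $a_{l}=\tfrac{1}{2\,l!}\bigl(p^{(l-1)}(1)+p^{(l-1)}(0)\bigr)$. The one genuine difference is how the derivatives are computed. You first identify $p(x)=G_{n}(2x)/n!$ and differentiate that closed form, obtaining the compact
\[
c_{l}=\frac{2^{l-2}}{l!\,(n-l+1)!}\bigl(G_{n-l+1}(2)+G_{n-l+1}\bigr),
\]
which then has to be expanded back into a $j$-sum to match the stated right hand side. The paper instead differentiates the original sum $\sum_{l}\tfrac{1}{l!(n-l)!}G_{l}(x)x^{n-l}$ termwise, getting
\[
p^{(k)}(x)=2^{k}\sum_{j=k}^{n}\frac{1}{(j-k)!\,(n-j)!}G_{j-k}(x)\,x^{n-j},
\]
so that evaluation at $0$ and $1$ already produces the $j$-indexed sum with entries $G_{j-l+1}(1)+0^{n-j}G_{j-l+1}$, and no re-expansion is needed. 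In other words, the ``delicate combinatorial check'' you flag at the end is precisely what the paper's more pedestrian termwise differentiation sidesteps. Your route has the virtue of exhibiting the clean closed form for the coefficients; the paper's lands directly on the stated shape. (The identification $p(x)=G_{n}(2x)/n!$ that you use up front appears in the paper only afterward, as a corollary to the next theorem.)
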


\begin{proof}
It is proved by using the following polynomial $p\left( x\right) :$ 
\begin{equation}
p\left( x\right) =\sum_{l=0}^{n}\frac{1}{l!\left( n-l\right) !}G_{l}\left(
x\right) x^{n-l}=\sum_{l=0}^{n}a_{l}G_{l}\left( x\right) \text{.}
\label{Equation 33}
\end{equation}%
It is not difficult to indicate the following:%
\begin{equation}
p^{\left( k\right) }\left( x\right) =2^{k}\sum_{l=k}^{n}\frac{1}{\left(
l-k\right) !\left( n-l\right) !}G_{l-k}\left( x\right) x^{n-l}\text{.}
\label{Equation 20}
\end{equation}%
Then, we see that for $k=1,2,\cdots ,n,$%
\begin{align}
a_{l}& =\frac{1}{2l!}\left( p^{\left( l-1\right) }\left( 1\right) +p^{\left(
l-1\right) }\left( 0\right) \right)  \label{Equation 32} \\
& =\frac{2^{l-2}}{l!}\sum_{j=l-1}^{n}\frac{1}{\left( j-l+1\right) !\left(
n-j\right) !}\left( G_{j-l+1}\left( 1\right) +0^{n-j}G_{j-l+1}\right)  \notag
\\
& =\frac{2^{l-2}}{l!}\sum_{j=l-1}^{n}\frac{\left( 2-G_{l-j+1}\right) }{%
\left( j-l+1\right) !\left( n-j\right) !}+\frac{2^{l-2}}{l!\left(
n-l+1\right) !}G_{n-l+1}.  \notag
\end{align}

By (\ref{Equation 33}) and (\ref{Equation 32}), we arrive at the desired
result.
\end{proof}

\begin{theorem}
\label{theorem 3}The following identity%
\begin{gather}
\sum_{l=0}^{n}\frac{1}{l!\left( n-l\right) !}G_{l}\left( x\right) x^{n-l}
\label{Equation 23} \\
=-2\frac{G_{n+1}}{n+1}+\sum_{l=1}^{n-1}\sum_{j=1}^{n-l}\frac{\left(
-1\right) ^{j}}{l!\left( n-l+1\right) !}\frac{\binom{n-l+1}{j}}{\binom{l+j}{l%
}}G_{l+j}+2\frac{\left( -1\right) ^{n-l+1}G_{n+1}}{\left( n+1\right) \binom{n%
}{l}}  \notag \\
+\sum_{k=1}^{n}\left( \frac{2^{k-1}}{k!}\sum_{l=k-1}^{n}\frac{\left(
2-G_{l-k+1}\right) }{\left( l-k+1\right) !\left( n-l\right) !}-\frac{2^{k-1}%
}{k!\left( n-k+1\right) !}G_{n-k+1}\right) B_{k}\left( x\right)  \notag
\end{gather}%
is true.
\end{theorem}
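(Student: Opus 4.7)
The plan is to expand the same polynomial
\[
p(x) = \sum_{l=0}^{n} \frac{1}{l!(n-l)!} G_{l}(x)\, x^{n-l}
\]
used in the preceding theorem, but this time relative to the Bernoulli basis of $\mathcal{P}_{n}$ rather than the Genocchi basis. Writing $p(x) = a_{0} + \sum_{k=1}^{n} a_{k} B_{k}(x)$, the standard relations for the Bernoulli basis give
\[
a_{0} = \int_{0}^{1} p(x)\,dx, \qquad a_{k} = \frac{1}{k!}\bigl(p^{(k-1)}(1) - p^{(k-1)}(0)\bigr) \quad (k\geq 1),
\]
and the theorem will follow by substituting the resulting coefficients back into the expansion.

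For $a_{0}$ I would use linearity together with the definition (\ref{Equation 16}) of $T_{m,n}$ to write $a_{0} = \sum_{l=0}^{n} \frac{T_{l,n-l}}{l!(n-l)!}$. The $l=0$ summand vanishes since $G_{0}=0$, and the boundary summand $l=n$ reduces via (\ref{Equation 5}) to $T_{n,0} = -2G_{n+1}/(n+1)$, accounting for the isolated leading term of the right-hand side of (\ref{Equation 23}). On the remaining range $1 \leq l \leq n-1$ I would substitute the closed form (\ref{Equation 17}) for $T_{l,n-l}$; this recovers both the double sum in $j$ and the trailing $(-1)^{n-l+1}G_{n+1}$ contribution.

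For $a_{k}$ with $k \geq 1$, I would reuse the derivative formula (\ref{Equation 20}) established in the previous theorem, namely $p^{(k-1)}(x) = 2^{k-1}\sum_{l=k-1}^{n}\frac{G_{l-k+1}(x)\,x^{n-l}}{(l-k+1)!(n-l)!}$. Evaluating at $x=0$ only the $l=n$ term survives (because $x^{n-l}$ kills all others), giving the isolated contribution $\frac{2^{k-1}G_{n-k+1}}{k!(n-k+1)!}$. Evaluating at $x=1$ produces $G_{l-k+1}(1)$, which I would rewrite through the consequence of (\ref{Equation 7}) at $x=0$, namely $G_{m}(1) + G_{m} = 2\delta_{m,1}$, so as to produce exactly the factor $(2 - G_{l-k+1})$ appearing in the coefficient of $B_{k}(x)$.

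The main obstacle is bookkeeping: correctly matching the index ranges, keeping track of the $\binom{n}{l}^{-1}$ factor that arises after using $l!(n-l)!\binom{n}{l}=n!$ to simplify the boundary $l=n$ contribution to $a_{0}$, and ensuring that the endpoint $l=n$ is not double-counted when converting $G_{l-k+1}(1)$ to $2 - G_{l-k+1}$ in the computation of $a_{k}$. Once these index manipulations are carried out, inserting the two families of coefficients into $p(x) = a_{0} + \sum_{k=1}^{n} a_{k} B_{k}(x)$ gives precisely the identity (\ref{Equation 23}).
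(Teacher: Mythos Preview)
Your proposal is correct and follows essentially the same route as the paper: expand $p(x)$ in the Bernoulli basis, compute $a_{0}=\int_{0}^{1}p(x)\,dx$ via the integrals $T_{l,n-l}$ and formula~(\ref{Equation 17}), and compute $a_{k}$ for $k\geq 1$ from the derivative formula~(\ref{Equation 20}) together with the evaluation $G_{m}(1)=2\delta_{m,1}-G_{m}$. The bookkeeping caveats you flag (the vanishing $l=0$ term, the boundary $l=n$ contribution, and the handling of $0^{n-l}$ in $p^{(k-1)}(0)$) are exactly the points where the paper's computation condenses steps.
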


\begin{proof}
Now also, let us take the polynomial in terms of Bernoulli polynomials as 
\begin{equation}
p\left( x\right) =\sum_{k=0}^{n}a_{k}B_{k}\left( x\right) .
\label{Equation 34}
\end{equation}%
By using the above identity, we develop as follows:%
\begin{align}
a_{0}& =\int_{0}^{1}p\left( x\right) dx=\sum_{l=0}^{n}\frac{1}{l!\left(
n-l\right) !}\int_{0}^{1}G_{l}\left( x\right) x^{n-l}dx  \label{Equation 35}
\\
& =\sum_{l=0}^{n}\frac{1}{l!\left( n-l\right) !}T_{l,n-l}=T_{n,0}+%
\sum_{l=1}^{n-1}\frac{1}{l!\left( n-l\right) !}T_{l,n-l}  \notag \\
& =-2\frac{G_{n+1}}{n+1}+\sum_{l=1}^{n-1}\sum_{j=1}^{n-l}\frac{\left(
-1\right) ^{j}}{l!\left( n-l+1\right) !}\frac{\binom{n-l+1}{j}}{\binom{l+j}{l%
}}G_{l+j}+2\frac{\left( -1\right) ^{n-l+1}G_{n+1}}{\left( n+1\right) \binom{n%
}{l}}.  \notag
\end{align}%
By (\ref{Equation 20}), we compute $a_{k}$ coefficients, as follows:%
\begin{eqnarray}
a_{k} &=&\frac{1}{k!}\left( p^{\left( k-1\right) }\left( 1\right) -p^{\left(
k-1\right) }\left( 0\right) \right)  \label{Equation 36} \\
&=&\frac{2^{k-1}}{k!}\sum_{l=k-1}^{n}\frac{1}{\left( l-k+1\right) !\left(
n-l\right) !}\left( G_{l-k+1}\left( 1\right) -0^{n-l}G_{l-k+1}\right)  \notag
\\
&=&\frac{2^{k-1}}{k!}\sum_{l=k-1}^{n}\frac{\left( 2-G_{l-k+1}\right) }{%
\left( l-k+1\right) !\left( n-l\right) !}-\frac{2^{k-1}}{k!\left(
n-k+1\right) !}G_{n-k+1}\text{.}  \notag
\end{eqnarray}

When we substituted (\ref{Equation 35}) and (\ref{Equation 36}) into (\ref%
{Equation 34}), the proof of theorem will be completed.
\end{proof}

By using equation (\ref{Equation 29}) and theorem \ref{theorem 3}, we
procure the following corollary.

\begin{corollary}
For any $n\in 
\mathbb{N}
,$ then we have%
\begin{gather*}
\sum_{l=0}^{n}\frac{1}{l!\left( n-l\right) !}G_{l}\left( x\right) x^{n-l} \\
=-2E_{n}+\sum_{l=1}^{n-1}\sum_{j=1}^{n-l}\frac{\left( -1\right) ^{j}}{%
l!\left( n-l+1\right) !}\frac{\left( l+j\right) \binom{n-l+1}{j}}{\binom{l+j%
}{l}}E_{l+j-1}+2\frac{\left( -1\right) ^{n-l+1}E_{n}}{\binom{n}{l}} \\
+\sum_{k=1}^{n}\left( \frac{2^{k-1}}{k!}\sum_{l=k-1}^{n}\frac{\left( \frac{2%
}{l-k+1}-E_{l-k}\right) }{\left( l-k\right) !\left( n-l\right) !}-\frac{%
2^{k-1}}{k!\left( n-k\right) !}E_{n-k}\right) B_{k}\left( x\right)
\end{gather*}
\end{corollary}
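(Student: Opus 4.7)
The plan is to obtain this corollary as an immediate specialization of Theorem \ref{theorem 3}, using the identity $G_{m+1}=(m+1)E_{m}$ from (\ref{Equation 29}) to eliminate every Genocchi number appearing on the right-hand side of (\ref{Equation 23}) in favor of an Euler number. The left-hand side remains untouched: the sum still features Genocchi polynomials in the variable $x$, so only the scalar coefficients need to be rewritten, and no new integration or basis expansion is required.

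First I would dispose of the two isolated occurrences of $G_{n+1}/(n+1)$. In the leading term $-2G_{n+1}/(n+1)$ and in the double-sum tail $2(-1)^{n-l+1}G_{n+1}/((n+1)\binom{n}{l})$, a direct application of (\ref{Equation 29}) yields $-2E_{n}$ and $2(-1)^{n-l+1}E_{n}/\binom{n}{l}$, matching the corresponding pieces of the corollary. Next, in the middle summand of (\ref{Equation 23}) I would write $G_{l+j}=(l+j)E_{l+j-1}$, which pulls an extra factor $(l+j)$ into the numerator and produces the coefficient $(-1)^{j}(l+j)\binom{n-l+1}{j}/(l!(n-l+1)!\binom{l+j}{l})$ in front of $E_{l+j-1}$, exactly as displayed.

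For the coefficient of $B_{k}(x)$, I would handle the two Genocchi pieces separately. Using the reindexed identity $G_{l-k+1}/(l-k+1)!=E_{l-k}/(l-k)!$, the ratio $(2-G_{l-k+1})/((l-k+1)!(n-l)!)$ rewrites as $(2/(l-k+1)-E_{l-k})/((l-k)!(n-l)!)$; similarly, $G_{n-k+1}/(n-k+1)!=E_{n-k}/(n-k)!$ converts the remaining summand $2^{k-1}G_{n-k+1}/(k!(n-k+1)!)$ into $2^{k-1}E_{n-k}/(k!(n-k)!)$. Collecting the four conversions reproduces the stated identity.

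The main obstacle is purely bookkeeping: one must keep the factorials, binomials, and alternating signs consistent through the index shift $m\mapsto m-1$ induced by the substitution $G_{m+1}\mapsto(m+1)E_{m}$, and in particular absorb each pulled-out factor of $m+1$ into the neighboring factorial in order to land on the precise coefficients written in the corollary. Beyond this careful shuffling, the argument is simply a termwise appeal to (\ref{Equation 29}) applied to (\ref{Equation 23}).
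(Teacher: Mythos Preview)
Your proposal is correct and is exactly the approach the paper takes: the corollary is obtained from Theorem~\ref{theorem 3} by a termwise application of the identity $G_{m+1}=(m+1)E_{m}$ from (\ref{Equation 29}), and you have carried out precisely those substitutions and factorial absorptions. The paper itself offers no more than the sentence ``By using equation (\ref{Equation 29}) and theorem \ref{theorem 3}, we procure the following corollary,'' so your write-up is in fact more detailed than the original.
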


In \cite{Kim 8}, it is well-known that%
\begin{equation}
G_{n}\left( x+y\right) =\sum_{k=0}^{n}\binom{n}{k}G_{k}\left( x\right)
y^{n-k}\text{.}  \label{Equation 21}
\end{equation}

For $x=y$ in (\ref{Equation 21}), then we have the following%
\begin{equation}
\frac{1}{n!}G_{n}\left( 2x\right) =\sum_{k=0}^{n}\frac{1}{k!\left(
n-k\right) !}G_{k}\left( x\right) x^{n-k}.  \label{Equation 22}
\end{equation}

By comparing the equations of (\ref{Equation 23}) and (\ref{Equation 22}),
then we readily derive the following corollary.

\begin{corollary}
\begin{equation*}
\frac{1}{n!}G_{n}\left( 2x\right) =\text{the right-hand-side of equation in
Theorem 2.4.}
\end{equation*}
\end{corollary}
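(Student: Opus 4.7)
The plan is essentially to observe that the corollary is an immediate consequence of combining the symmetric addition formula for Genocchi polynomials with the identity already established in Theorem \ref{theorem 3}. There is no real computational content beyond a substitution and a matching of summations.

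First I would recall the addition formula (\ref{Equation 21}), namely
\[
G_{n}(x+y)=\sum_{k=0}^{n}\binom{n}{k}G_{k}(x)\,y^{n-k},
\]
and specialize $y=x$. Expanding the binomial coefficient as $\binom{n}{k}=\frac{n!}{k!(n-k)!}$ and dividing both sides by $n!$ gives precisely
\[
\frac{1}{n!}G_{n}(2x)=\sum_{k=0}^{n}\frac{1}{k!\,(n-k)!}G_{k}(x)\,x^{n-k},
\]
which is identity (\ref{Equation 22}).

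Next I would note that, after the harmless relabeling $k\leftrightarrow l$, the right-hand side of (\ref{Equation 22}) is identical to the left-hand side of equation (\ref{Equation 23}) in Theorem \ref{theorem 3}. Theorem \ref{theorem 3} has already rewritten that sum as the displayed expression in $B_{k}(x)$, so chaining the two equalities yields exactly the assertion of the corollary.

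There is no substantive obstacle here; the only thing to be careful about is bookkeeping of indices between (\ref{Equation 22}) and (\ref{Equation 23}) to make sure the two left-hand sides genuinely coincide as polynomials in $x$, which they do since both are sums over $0\le l\le n$ of the terms $\tfrac{1}{l!(n-l)!}G_{l}(x)x^{n-l}$. Thus the corollary is obtained simply by substituting the explicit expansion of the sum from Theorem \ref{theorem 3} into the left-hand side of (\ref{Equation 22}).
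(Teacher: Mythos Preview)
Your argument is correct and matches the paper's approach exactly: specialize the addition formula (\ref{Equation 21}) at $y=x$ to obtain (\ref{Equation 22}), then identify its right-hand side with the left-hand side of (\ref{Equation 23}) in Theorem~\ref{theorem 3}. The paper states this in a single line (``By comparing the equations of (\ref{Equation 23}) and (\ref{Equation 22})\ldots''), but the content is the same.
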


\begin{theorem}
\label{theorem 4}The following equality%
\begin{gather*}
\sum_{k=1}^{n-1}\frac{1}{k\left( n-k\right) }G_{k}\left( x\right) x^{n-k} \\
=\sum_{k=0}^{n}\left( \frac{\binom{n}{k}}{2\left( n-k+1\right) }\left(
H_{n-1}-H_{n-k}\right) -\frac{\binom{n}{k}}{2n}\sum_{l=k}^{n-1}\frac{\left(
2-G_{l-k+1}\right) }{\left( n-l\right) \left( l-k+1\right) }\right)
G_{k}\left( x\right)
\end{gather*}%
holds true.
\end{theorem}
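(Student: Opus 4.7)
The plan is to apply the Genocchi-basis expansion technique used earlier in this section. Set
\[
p(x) = \sum_{l=1}^{n-1}\frac{1}{l(n-l)}\, G_l(x)\, x^{n-l},
\]
which is a polynomial of degree at most $n-1$ and hence admits an expansion $p(x) = \sum_{k=1}^{n} c_k\, G_k(x)$ in the Genocchi basis. Differentiating both sides $k-1$ times, using $G_1(x) = 1$, $G_0(x) = 0$, and the reflection $G_m(1) + G_m = 2\delta_{1,m}$ (obtained at once by multiplying \eqref{Equation 1} through by $e^t + 1$), the sum $p^{(k-1)}(1) + p^{(k-1)}(0)$ collapses to the single $G_k$-contribution from the expansion, yielding
\[
c_k = \frac{p^{(k-1)}(1) + p^{(k-1)}(0)}{2\,k!}.
\]
This is exactly the coefficient formula used inside the Genocchi-basis theorem preceding Theorem~\ref{theorem 3}.

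Next, I compute $p^{(k-1)}(x)$ via Leibniz's rule,
\[
p^{(k-1)}(x) = \sum_{l=1}^{n-1}\frac{1}{l(n-l)}\sum_{j=0}^{k-1}\binom{k-1}{j}\frac{l!}{(l-j)!}\cdot\frac{(n-l)!}{(n-l-k+1+j)!}\, G_{l-j}(x)\, x^{n-l-k+1+j},
\]
with the convention that factors involving factorials of negative integers vanish. At $x=0$ only the slice $j = l+k-n-1$ survives (which forces $l \ge n+1-k$), giving a $+G_{n-k+1}$ piece. At $x=1$ every admissible pair $(l,j)$ contributes, and substituting $G_{l-j}(1) = 2\delta_{1,l-j} - G_{l-j}$ splits the contribution into a $2\delta$ piece, supported on $j = l-1$, and a $-G$ piece. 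Adding $p^{(k-1)}(0)$ and $p^{(k-1)}(1)$ and re-indexing via $m = l-k+1$, the two $G$-contributions combine into the factor $-G_{l-k+1}$ appearing inside the second sum of the theorem, while the $2\delta$ piece provides the $+2$ completing $(2 - G_{l-k+1})$ together with a separate harmonic contribution.

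It remains to collect the harmonic contribution into $\tfrac{\binom{n}{k}}{n-k+1}(H_{n-1} - H_{n-k})$. The $2\delta$ piece at $j = l-1$, $l \in [1,\min(k,n-1)]$, produces a single sum $\sum_l \tfrac{1}{l(n-l)}\binom{k-1}{l-1}\tfrac{l!(n-l)!}{(n-k)!}$; applying the partial-fraction identity $\tfrac{1}{l(n-l)} = \tfrac{1}{n}\bigl(\tfrac{1}{l} + \tfrac{1}{n-l}\bigr)$ and a standard telescoping of the resulting factorial ratios yields the stated harmonic-number difference, after the global prefactor $\tfrac{1}{2k!}$ is restored. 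I expect this last algebraic reduction to be the main obstacle: unlike the $T_{m,n}$-recurrence \eqref{Equation 17} that drives the proofs of Theorems~\ref{Theorema} and~\ref{theorem 3}, the weights $\tfrac{1}{l(n-l)}$ here admit no such recursion, and recognizing $H_{n-1} - H_{n-k}$ requires an explicit partial-fraction and telescoping argument together with careful bookkeeping to ensure that all boundary terms (particularly at $l = k$ and $l = n-1$) match the precise form of the theorem.
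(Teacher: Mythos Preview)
Your overall strategy is exactly the paper's: expand $p(x)=\sum_{l=1}^{n-1}\frac{1}{l(n-l)}G_l(x)x^{n-l}$ in the Genocchi basis and extract each coefficient via $c_k=\frac{1}{2k!}\bigl(p^{(k-1)}(1)+p^{(k-1)}(0)\bigr)$, using $G_m(1)+G_m=2\delta_{1,m}$. Where you diverge is in how the derivatives are computed. You reach for Leibniz's rule term by term and then propose to recover $H_{n-1}-H_{n-k}$ afterwards by partial fractions $\frac{1}{l(n-l)}=\frac{1}{n}\bigl(\frac{1}{l}+\frac{1}{n-l}\bigr)$ and telescoping. The paper instead observes that differentiating $p$ once reproduces a sum of the same shape (with shifted indices) plus two boundary terms, and iterates this to get the closed form
\[
p^{(k)}(x)=C_k\bigl(x^{n-k}+G_{n-k}(x)\bigr)+(n-1)\cdots(n-k)\sum_{l=k+1}^{n-1}\frac{G_{l-k}(x)x^{n-l}}{(n-l)(l-k)},
\]
with explicit constants $C_k$. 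The harmonic-number piece then drops out by quoting the identity $\frac{C_{k-1}}{k!}=\frac{\binom{n}{k}}{n-k+1}(H_{n-1}-H_{n-k})$ from \cite{Kim 5}, rather than by an ad hoc telescoping argument.

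Both routes are valid. The paper's self-similar derivative formula is cleaner: it separates, from the outset, the ``bulk'' sum (which directly produces the $(2-G_{l-k+1})$ term after evaluating at $0$ and $1$) from the boundary contribution $C_{k-1}$, and offloads the harmonic-number algebra to a known lemma. Your Leibniz computation mixes these contributions together, so the step you flag as the main obstacle---disentangling the $\delta$-slice and summing it to $H_{n-1}-H_{n-k}$---is real work that the paper sidesteps entirely. If you want to complete your version, it is probably easiest to first establish the paper's form of $p^{(k)}(x)$ by a short induction on $k$ (using $\frac{d}{dx}G_l(x)=lG_{l-1}(x)$ and the weight structure $\frac{1}{l(n-l)}$), which makes the bookkeeping at $l=k$ and $l=n-1$ transparent, rather than trying to collapse the double Leibniz sum directly.
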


\begin{proof}
To prove this theorem, we introduce the following polynomial $p\left(
x\right) :$ 
\begin{equation*}
p\left( x\right) =\sum_{k=1}^{n-1}\frac{1}{k\left( n-k\right) }G_{k}\left(
x\right) x^{n-k}\text{.}
\end{equation*}%
Then, we derive $k$-th derivative of $p\left( x\right) $ is given by%
\begin{equation}
p^{\left( k\right) }\left( x\right) =C_{k}\left( x^{n-k}+G_{n-k}\left(
x\right) \right) +\left( n-1\right) \left( n-2\right) \cdots \left(
n-k\right) \sum_{l=k+1}^{n-1}\frac{G_{l-k}\left( x\right) x^{n-l}}{\left(
n-l\right) \left( l-k\right) },  \label{Equation 24}
\end{equation}%
where%
\begin{equation*}
C_{k}=\frac{\sum_{j=1}^{k}\left( n-1\right) ...\left( n-j+1\right) \left(
n-j-1\right) ...\left( n-k\right) }{n-k}\text{ }\left( k=1,2,...,n-1\right) 
\text{, }C_{0}=0\text{.}
\end{equation*}%
We want to note that%
\begin{equation*}
p^{\left( n\right) }\left( x\right) =\left( p^{\left( n-1\right) }\left(
x\right) \right) 
{\acute{}}%
=C_{n-1}\left( x+G_{1}\left( x\right) \right) =C_{n-1}=\left( n-1\right)
!H_{n-1},
\end{equation*}%
where $H_{n-1}$ are called Harmonic numbers, which are defined by%
\begin{equation*}
H_{n-1}=\sum_{j=1}^{n-1}\frac{1}{j}\text{.}
\end{equation*}%
With the properties of Genocchi basis for the space of polynomials of degree
less than or equal to $n$ with coefficients in $%
\mathbb{Q}
$, $p\left( x\right) $ is introduced by%
\begin{equation}
p\left( x\right) =\sum_{k=0}^{n}a_{k}G_{k}\left( x\right) \text{.}
\label{Equation 25}
\end{equation}%
By expression of (\ref{Equation 25}), we obtain that%
\begin{eqnarray*}
a_{k} &=&\frac{1}{2k!}\left( p^{\left( k-1\right) }\left( 1\right)
+p^{\left( k-1\right) }\left( 0\right) \right) \\
&=&\frac{C_{k-1}}{2k!}\left( 1+2\delta _{1,n-k+1}\right) +\frac{\left(
n-1\right) !}{2k!\left( n-k\right) !}\sum_{l=k}^{n-1}\frac{\left(
G_{l-k+1}\left( 1\right) +0^{n-l}G_{l-k+1}\right) }{\left( n-l\right) \left(
l-k+1\right) } \\
&=&\frac{C_{k-1}}{2k!}-\frac{\binom{n}{k}}{2n}\sum_{l=k}^{n-1}\frac{\left(
2-G_{l-k+1}\right) }{\left( n-l\right) \left( l-k+1\right) }.
\end{eqnarray*}%
As a result,%
\begin{equation*}
a_{n}=\frac{1}{2n!}\left( p^{\left( n\right) }\left( 1\right) +p^{\left(
n\right) }\left( 0\right) \right) =\frac{C_{n-1}}{n!}=\frac{H_{n-1}}{n}\text{%
.}
\end{equation*}%
In \cite{Kim 5}, it is well-known that%
\begin{equation}
\frac{C_{k-1}}{k!}=\frac{\binom{n}{k}}{\left( n-k+1\right) }\left(
H_{n-1}-H_{n-k}\right) \text{.}  \label{Equation 26}
\end{equation}%
By (\ref{Equation 24}), (\ref{Equation 25}) and (\ref{Equation 26}), then we
arrive at the desired result.
\end{proof}

From (\ref{Equation 29}) and (\ref{theorem 4}), we acquire the following.

\begin{corollary}
The following identity holds:%
\begin{gather*}
\sum_{k=1}^{n-1}\frac{1}{k\left( n-k\right) }G_{k}\left( x\right) x^{n-k} \\
=\sum_{k=1}^{n}\left( \frac{\binom{n}{k}}{2\left( n-k+1\right) }\left(
H_{n-1}-H_{n-k}\right) -\frac{\binom{n}{k}}{2n}\sum_{l=k}^{n-1}\frac{\left( 
\frac{2}{l-k+1}-E_{l-k}\right) }{\left( n-l\right) }\right) kE_{k-1}\left(
x\right)
\end{gather*}
\end{corollary}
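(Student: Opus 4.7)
The plan is to obtain this corollary as a direct consequence of Theorem \ref{theorem 4}, using the relation between Genocchi numbers and Euler numbers recorded in equation (\ref{Equation 29}). The essential tool is the polynomial analogue of that identity: multiplying the Genocchi generating function (\ref{Equation 1}) by $e^{xt}$ gives
\begin{equation*}
\sum_{n\geq 0}G_{n}(x)\frac{t^{n}}{n!}=t\cdot \frac{2}{e^{t}+1}e^{xt}=t\sum_{n\geq 0}E_{n}(x)\frac{t^{n}}{n!},
\end{equation*}
so comparing coefficients of $t^{n}/n!$ yields $G_{0}(x)=0$ and
\begin{equation*}
G_{n}(x)=nE_{n-1}(x)\qquad (n\geq 1).
\end{equation*}
This is what equation (\ref{Equation 29}) becomes at the polynomial level, and it is the only substitution I will need.

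Next, I would apply this substitution to the identity of Theorem \ref{theorem 4}. On the left-hand side, each factor $G_{k}(x)$ (with $1\leq k\leq n-1$) turns into $kE_{k-1}(x)$, producing exactly the expression appearing on the left of the corollary. On the right-hand side, the index $k$ runs over $0,1,\dots ,n$; the term $k=0$ vanishes because $G_{0}(x)=0$, and the remaining terms $k=1,\dots ,n$ get $G_{k}(x)$ replaced by $kE_{k-1}(x)$, producing the outer factor $kE_{k-1}(x)$ of the corollary.

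Finally, inside the inner sum $\sum_{l=k}^{n-1}\frac{2-G_{l-k+1}}{(n-l)(l-k+1)}$ appearing in Theorem \ref{theorem 4}, the subscript $l-k+1$ is at least $1$, so I may substitute $G_{l-k+1}=(l-k+1)E_{l-k}$. The factor $(l-k+1)$ then cancels against the denominator, giving
\begin{equation*}
\frac{2-G_{l-k+1}}{(n-l)(l-k+1)}=\frac{1}{n-l}\left( \frac{2}{l-k+1}-E_{l-k}\right) ,
\end{equation*}
which is exactly the summand inside the corollary. Assembling these pieces reproduces the claimed identity. There is no real obstacle here; the only point that needs a little care is verifying that the $k=0$ term on the right-hand side of Theorem \ref{theorem 4} drops out cleanly (it does, since $G_{0}(x)\equiv 0$), so that the rewritten sum begins at $k=1$ to match the statement of the corollary.
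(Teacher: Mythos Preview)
Your approach is correct and coincides with the paper's: the corollary is obtained from Theorem \ref{theorem 4} by applying the relation $G_{m}=mE_{m-1}$ (the polynomial form of (\ref{Equation 29})) on the right-hand side, with the $k=0$ term dropping out because $G_{0}(x)=0$ and the inner summand simplifying exactly as you describe. One small slip: the left-hand side of the corollary is \emph{identical} to that of Theorem \ref{theorem 4} and requires no substitution---replacing $G_{k}(x)$ by $kE_{k-1}(x)$ there would not reproduce the stated left side, so simply leave it untouched.
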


\section{\textbf{Further Remarks}}

Let $\mathcal{P}_{n}=\left\{ \sum_{j=0}a_{j}x^{j}\mid a_{j}\in 
\mathbb{Q}
\right\} $ be the space of polynomials of degree less than or equal to $n.$
In this final section, we will give the matrix formulation of Genocchi
polynomials. Let us now consider the polynomial $p\left( x\right) \in 
\mathcal{P}_{n}$ as a linear combination of Genocchi basis polynomials with%
\begin{equation*}
p\left( x\right) =C_{1}G_{1}\left( x\right) +C_{2}G_{2}\left( x\right)
+\cdots +C_{n+1}G_{n+1}\left( x\right) \text{.}
\end{equation*}

We can write the above as a product of two variables%
\begin{equation}
p\left( x\right) =\left( 
\begin{array}{cccc}
G_{1}\left( x\right) & G_{2}\left( x\right) & \cdots & G_{n+1}\left( x\right)%
\end{array}%
\right) \left( 
\begin{array}{c}
C_{1} \\ 
C_{2} \\ 
\vdots \\ 
C_{n+1}%
\end{array}%
\right) .  \label{Equation 27}
\end{equation}

From expression of (\ref{Equation 27}), we consider the following equation:%
\begin{equation*}
p\left( x\right) =\left( 
\begin{array}{ccccc}
1 & x & x^{2} & \cdots & x^{n}%
\end{array}%
\right) \left( 
\begin{array}{cccc}
g_{1,1} & g_{1,2} & \cdots & g_{1,n+1} \\ 
0 & g_{2,2} & \cdots & g_{2,n+1} \\ 
0 & 0 & \cdots & g_{3,n+1} \\ 
\vdots & \vdots & \ddots & \vdots \\ 
0 & 0 & 0 & g_{n+1,n+1}%
\end{array}%
\right) \left( 
\begin{array}{c}
C_{1} \\ 
C_{2} \\ 
C_{3} \\ 
\vdots \\ 
C_{n+1}%
\end{array}%
\right)
\end{equation*}%
where $g_{i,j}$ are the coefficients of the power basis that are used to
determine the respective Genocchi polynomials. We now list a few Genocchi
polynomials as follows:%
\begin{equation*}
G_{1}\left( x\right) =1,\text{ }G_{2}\left( x\right) =2x-1,\text{ }%
G_{3}\left( x\right) =3x^{2}-3x,\text{ }G_{4}\left( x\right)
=4x^{3}-6x^{2}-1,\cdots .
\end{equation*}

In the quadratic case ($n=2$), the matrix representation is%
\begin{equation*}
p\left( x\right) =\left( 
\begin{array}{ccc}
1 & x & x^{2}%
\end{array}%
\right) \left( 
\begin{array}{ccc}
1 & -1 & 0 \\ 
0 & 2 & -3 \\ 
0 & 0 & 3%
\end{array}%
\right) \left( 
\begin{array}{c}
C_{1} \\ 
C_{2} \\ 
C_{3}%
\end{array}%
\right) \text{.}
\end{equation*}

In the cubic case ($n=3$), the matrix representation is%
\begin{equation*}
p\left( x\right) =\left( 
\begin{array}{cccc}
1 & x & x^{2} & x^{3}%
\end{array}%
\right) \left( 
\begin{array}{cccc}
1 & -1 & 0 & -1 \\ 
0 & 2 & -3 & 0 \\ 
0 & 0 & 3 & -6 \\ 
0 & 0 & 0 & 4%
\end{array}%
\right) \text{.}
\end{equation*}

Throughout this paper, many considerations for Genocchi polynomials seem to
be useful for a matrix formulation.\hspace{0.5cm}

%


\begin{thebibliography}{99}
\bibitem{araci 3} S. Araci, D. Erdal and J. J. Seo, A study on the fermionic 
$p$-adic $q$-integral representation on $%
\mathbb{Z}
_{p}$ associated with weighted $q$-Bernstein and $q$-Genocchi polynomials, 
\textit{Abstract and Applied Analysis}, Volume \textbf{2011}, Article ID
649248, 10 pages.

\bibitem{Araci 4} S. Araci, J. J. Seo, D. Erdal, New construction weighted $%
(h,q)$-Genocchi numbers and polynomials related to zeta type functions, 
\textit{Discrete Dyn. Nat. Soc}. \textbf{2011}, Art. ID 487490, 7 pp.

\bibitem{Araci 1} S. Araci, M. Acikgoz and J. J. Seo, Explicit formulas
involving $q$-Euler numbers and polynomials, \textit{Abstract and Applied
Analysis}, Volume \textbf{2012}, Article ID 298531, 11 pages.

\bibitem{araci 2} S. Araci, M. Acikgoz and F. Qi, On the $q$-Genocchi
numbers and polynomials with weight zero and their interpolation function,
arXiv:1202.2643v1 [math.NT].

\bibitem{Araci 5} S. Araci, D. Erdal, and D. J. Kang, Some new properties on
the $q$-Genocchi numbers and polynomials associated with $q$-Bernstein
polynomials, \textit{Honam Mathematical J.} 33 (\textbf{2011}) no. 2, pp.
261-270.

\bibitem{Araci 6} S. Araci, M. Acikgoz, K. H. Park and H. Jolany, On the
unification of two families of multiple twisted type polynomials by using $p$%
-adic $q$-integral on $%
\mathbb{Z}
_{p}$ at $q=-1$, \textit{Bulletin of the Malaysian Mathematical Sciences and
Society} (accepted for publication).

\bibitem{Araci 7} S. Araci, M. Acikgoz and K. H. Park, A note on the $q$%
-analogue of Kim's $p$-adic $log$ gamma type functions associated with $q$%
-extensions of Genocchi and Euler numbers with weight $\alpha $, \textit{%
Bulletin of the Korean Mathematical Society} (In press).

\bibitem{Acikgoz} M. Acikgoz and Y. Simsek, On multiple interpolation
functions of the N\"{o}rlund-type $q$-Euler polynomials, \textit{Abstract
and Applied Analysis}, Volume \textbf{2009}, Article ID 382574, 14 pages.

\bibitem{Kim 11} T. Kim, Symmetry of power sum polynomials and multivariate
fermionic $p$-adic invariant integral on $%
\mathbb{Z}
_{p}$, \textit{Russ. J. Math. Phys. }16 (\textbf{2009}), no. 1, 93-96.

\bibitem{Kim 8} T. Kim, Some identities for the Bernoulli, the Euler and the
Genocchi numbers and polynomials, \textit{Adv. Stud. Contemp. Math.} 20,
23--28 (\textbf{2010}).

\bibitem{Kim 9} T. Kim, Some identities on the $q$-Euler polynomials of
higher order and $q$-Stirling numbers by the fermionic $p$-adic integral on $%
\mathbb{Z}
_{p}$, \textit{Russ. J. Math. Phys. }16 (\textbf{2009}), No. 4, 484-491.

\bibitem{Kim 10} T. Kim, On the $q$-extension of Euler and Genocchi numbers, 
\textit{J. Math. Anal. Appl. }326 (\textbf{2007}) 1458-1465.

\bibitem{Kim 1} T. Kim, Identities involving Frobenius--Euler polynomials
arising from non-linear differential equations, \textit{Journal of Number
Theory}, Volume 132, Issue 12, Pages 2854-2865, \textbf{2012}.

\bibitem{Kim 2} D. S. Kim, T. Kim, S. H. Lee, Y. H. Kim, Some identities for
the product of two Bernoulli and Euler polynomials, \textit{Advances in
Difference Equations} \textbf{2012} 2012:95.

\bibitem{Kim 3} T. Kim and D. S. Kim, Extended Laguerre polynomials
associated with Hermite, Bernoulli and Euler numbers and polynomials, 
\textit{Abstract and Applied Analysis}, Volume \textbf{2012}, Article ID
95730, 13 pages.

\bibitem{Kim 4} T. Kim, D. S. Kim and D. V. Dolgy, Some identities on
Bernoulli and Hermite polynomials associated with Jacobi polynomials, 
\textit{Discrete Dynamics in Nature and Society}, Volume \textbf{2012},
Article ID 584643, 10 pages.

\bibitem{Kim 5} D. S. Kim, D. V. Dolgy, T. Kim, and S. H. Rim, Some formulae
for the product of two Bernoulli and Euler polynomials, \textit{Abstract and
Applied Analysis}, Volume \textbf{2012}, Article ID 784307, 15 pages.

\bibitem{Kim 6} D. S. Kim and T. Kim, Euler basis, identities and their
applications, \textit{International Journal of Mathematics and Mathematical
Sciences}, Volume \textbf{2012}, Article ID 343981, 13 pages.

\bibitem{Kim 7} D. S. Kim and T. Kim, Bernoulli basis and the product of
several Bernoulli polynomials, \textit{International Journal of Mathematics
and Mathematical Sciences}, Volume \textbf{2012}, Article ID 463659, 13
pages.

\bibitem{Kimm 8} D. S. Kim, T. Kim, W. J. Kim and D. V. Dolgy, A note on
Eulerian polynomials, \textit{Abstract and Applied Analysis}, Volume \textbf{%
2012}, Article ID 269640, 10 pages.

\bibitem{Bayad} A. Bayad, T. Kim, Identities involving values of Bernstein, $%
q$-Bernoulli, and $q$-Euler polynomials, \textit{Russ. J. Math. Phys.} 18 (%
\textbf{2011}), no. 2, 133-143.

\bibitem{araci 1} E. Cetin, M. Acikgoz, I. N. Cangul and S. Araci, A note on
the ($h,q$)-Zeta type function with weight $\alpha $, arXiv:1206.5299v1
[math.NT].

\bibitem{Cangul} I. N. Cangul, H. Ozden, and Y. Simsek, Generating functions
of the ($h,q$) extension of twisted Euler polynomials and numbers, \textit{%
Acta Mathematica Hungarica}, vol. 120, no. 3, pp. 281--299, \textbf{2008}.

\bibitem{Jolany 1} H. Jolany and H. Sharifi, Some results for the
Apostol-Genocchi polynomials of higher order, \textit{Bulletin of Malaysian
Mathematical Sciences Society} (in press).

\bibitem{Jolany 2} H. Jolany, R. E. Alikelaye and S. S. Mohamad, Some
results on the generalization of Bernoulli, Euler and Genocchi polynomials, 
\textit{Acta Universitatis Apulensis}, No. 27, \textbf{2011}, pp. 299-306.

\bibitem{He} Y. He and C. Wang, Some Formulae of Products of the
Apostol-Bernoulli and Apostol-Euler Polynomials, \textit{Discrete Dynamics
in Nature and Society}, vol. 2012, Article ID 927953, 11 pages, \textbf{2012}%
. doi:10.1155/2012/927953.

\bibitem{Luo} Q. M. Luo, B. N. Guo, F. Qi, and L. Debnath, Generalization of
Bernoulli numbers and polynomials, \textit{IJMMS}, Vol. \textbf{2003}, Issue
59, 2003, 3769-3776.
\end{thebibliography}
\end{document}